\documentclass{amsart}

\usepackage{amsmath}
\usepackage{amsfonts}
\usepackage{amssymb}
\usepackage{amsthm}
\usepackage{bm}
\usepackage{enumerate}
\usepackage{algorithm}
\usepackage{algorithmicx}
\usepackage{algpseudocode}

\allowdisplaybreaks

\newtheorem{theorem}{Theorem}[section]
\newtheorem{lemma}[theorem]{Lemma}

\newcommand{\R}{\mathbb{R}}

\newcommand{\Z}{\mathbb{Z}}
\newcommand{\N}{\mathbb{N}}

\newcommand{\B}{{\mathcal{B}}}

\begin{document}
\title{On strictly nonzero integer-valued charges}
\author{Swastik Kopparty}
\thanks{Swastik Kopparty is supported in part by a Sloan
Fellowship and NSF grants CCF-1253886 and CCF-1540634.}
\address{Department of Mathematics \& Department of Computer Science,
Rutgers University, Piscataway NJ 08854}
\email{swastik@math.rutgers.edu}
\author{K.P.S. Bhaskara Rao}
\address{Department of Computer Information Systems,
Indiana University Northwest, Gary IN 46408}
\email{bkoppart@iun.edu}

\begin{abstract}

A charge (finitely additive measure) defined on a Boolean algebra of sets taking values in a group $G$ is called a strictly nonzero (SNZ) charge if it takes the identity value in $G$ only for the zero element of the Boolean algebra. A study of such charges was initiated by Rudiger G{\"o}bel and KPS Bhaskara Rao in 2002~\cite{GB}. 

Our main result is a solution to one of the questions posed in that paper: we show that for every cardinal $\aleph$, the Boolean algebra of clopen sets of $\{0,1\}^\aleph$ has a strictly nonzero integer-valued charge. The key lemma that we prove is that there exists a strictly nonzero integer-valued {\em permutation-invariant} charge on the Boolean algebra of clopen sets of $\{0,1\}^{\aleph_0}$.
Our proof is based on linear-algebraic arguments, as well as certain kinds of polynomial approximations of binomial coefficients.

We also show that there is no integer-valued SNZ charge on ${\mathcal{P}}(N)$. Finally, we raise some interesting problems on integer-valued SNZ charges.
\end{abstract}

\maketitle

\section{ Introduction}

If $G$ is a group and $\mathcal A$ is a Boolean algebra, when does there exist a strictly nonzero $G$-valued charge (finitely additive measure) on $\mathcal A$? This problem was posed by G{\"o}bel and Bhaskara Rao in {\cite{GB}}, and several results about this general question were proved there. 

Even the special cases of the above problem when the group $G$ equals the group of real numbers $\mathbb R$, the group of rational numbers $\mathbb Q$, or the group of integers $\Z$, are all interesting and suggest many challenging problems in the intersection of combinatorics, group theory, and set theory.

Kelley~\cite{JLK} gave necessary and sufficient conditions for the existence of a bounded strictly positive $\R$-valued charge. As was observed in~\cite{B}, this also provides a necessary and sufficient condition for the existence of a bounded strictly nonzero $\R$-valued charge.

Regarding the existence of $\Z$-valued SNZ charges, some necessary conditions were derived in \cite{GB}. For example, it was shown that if a Boolean algebra $B$ is nonatomic and if there is a $\Z$-valued SNZ charge on $B$, then $B$ should satisfy the countable chain condition (every collection of pairwise disjoint nonzero elements of $B$ is countable). It was also shown that if there is a $\Z$-valued SNZ charge on a Boolean algebra $B$, then every chain of distinct elements in $B$ is countable. 

In \cite{GB}, the question was raised as to whether the above two necessary conditions guarantee the existence of a $\Z$-valued SNZ charge.

The  Boolean algebra of clopen sets of $\{0,1\}^{\aleph}$ for an infinite cardinal $\aleph$ (denoted $\B(2^{\aleph})$) is a nonatomic Boolean algebra and satisfies both the above necessary conditions, namely, the
countable chain condition and the condition that every chain is countable.
In this context the question was raised as to whether this Boolean algebra admits a $\Z$-valued SNZ charge.

 Our main result is that $\B(2^{\aleph})$ has an SNZ $\Z$-valued charge.

\begin{theorem}
\label{thm1}
For every infinite cardinal $\aleph$, $B(2^{\aleph})$ has a strictly nonzero $\Z$-valued charge.
\end{theorem}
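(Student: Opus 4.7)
The plan is to establish, as a key lemma, the existence of a permutation-invariant $\Z$-valued SNZ charge $\mu_0$ on $\B(2^{\aleph_0})$, and then to leverage its invariance to obtain SNZ charges on $\B(2^\aleph)$ for arbitrary infinite $\aleph$.

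The extension step is routine given $\mu_0$. Any clopen $A\subseteq\{0,1\}^\aleph$ depends on only finitely many coordinates $F\subseteq\aleph$; choosing any injection $\iota:F\hookrightarrow\aleph_0$ lets one view $A$ as a clopen $\widetilde A\subseteq\{0,1\}^{\aleph_0}$, and one sets $\mu(A):=\mu_0(\widetilde A)$. Two injections of the same $F$ differ by a finite bijection that extends to a permutation of $\aleph_0$, so the permutation-invariance of $\mu_0$ makes $\mu$ independent of the choice of $\iota$. Finite additivity for disjoint clopens follows by embedding both at once using a common injection of their joint support, and SNZ passes from $\mu_0$ to $\mu$ directly.

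For the key lemma, a permutation-invariant $\Z$-valued charge on $\B(2^{\aleph_0})$ is completely determined by a sequence of integers $f_n(k)$ (for $0\le k\le n$), where $f_n(k)$ is the charge of any basic cylinder set fixing $n$ coordinates with $k$ of them set to $1$. The cylinder embedding $\{0,1\}^n\hookrightarrow\{0,1\}^{n+1}$ forces the backward recurrence $f_n(k)=f_{n+1}(k)+f_{n+1}(k+1)$, while SNZ translates into the combinatorial condition that for every $n$ and every nonzero $(s_0,\ldots,s_n)$ with $0\le s_k\le\binom{n}{k}$, one has $\sum_k s_k f_n(k)\ne 0$. General integer solutions to the recurrence are parametrized by an arbitrary $\phi:\Z_{\ge 0}\to\Z$ via $f_n(k)=\sum_{j=0}^k(-1)^j\binom{k}{j}\phi(n-k+j)$, so each SNZ condition becomes the nonvanishing of an explicit integer linear form $\sum_m\phi(m)\alpha_m^{(n)}(s)$.

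The hard part will be constructing such a $\phi$: one must avoid countably many nontrivial linear hyperplane conditions simultaneously over the integers. A naive inductive attempt---choosing $\phi(0),\phi(1),\ldots$ in turn and at each step excluding finitely many forbidden values of $\phi(n)$---breaks down on the ``degenerate'' constraints, those $s$ with $\sum_i(-1)^i s_i=0$, for which the coefficient of the newest $\phi(n)$ vanishes and satisfaction is dictated entirely by earlier commitments. This creates an a priori infinite nest of compatibility conditions on the earlier $\phi(m)$. I expect that the linear-algebraic arguments and polynomial approximations of binomial coefficients alluded to in the abstract are precisely the tools to handle this: one should produce specific $f_n(k)$ which, up to signs, closely track the binomial coefficients $\binom{n}{k}$---finely enough that no small-coefficient integer combination with $s_k\le\binom{n}{k}$ can cancel to zero, yet integer-valued and satisfying the backward Pascal recurrence. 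Once such a $\phi$ is produced the key lemma follows, and the theorem is obtained via the extension above.
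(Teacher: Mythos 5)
Your reduction of the theorem to the existence of a permutation-invariant $\Z$-valued SNZ charge on $\B(2^{\aleph_0})$ matches the paper's structure, and your direct extension argument (embedding the finite support of a clopen set into $\aleph_0$, with well-definedness and finite additivity secured by permutation-invariance) is correct and is a clean alternative to the paper's citation of Propositions 12 and 15 of~\cite{GB}. Your combinatorial setup of the key lemma is also accurate: your $f_n(k)$ corresponds to the paper's $h(n-k,k)$, your recurrence $f_n(k)=f_{n+1}(k)+f_{n+1}(k+1)$ is the paper's finite-additivity relation, and your parametrization by $\phi$ is exactly the paper's formula \eqref{hpformula} with $\phi=p$.

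However, there is a genuine gap: the key lemma is not actually proved. You correctly identify that the SNZ requirement becomes the nonvanishing of countably many integer linear forms in the $\phi(m)$, and you correctly diagnose why a naive coordinate-by-coordinate greedy choice fails (the constraints with $\sum_i (-1)^i s_i = 0$ have vanishing coefficient on the newest variable, so they impose conditions purely on prior choices, creating an a priori infinite nesting of compatibility requirements). But you then stop, saying only that you ``expect'' the linear-algebraic and approximation tools from the abstract handle this. That expectation is not a proof. What is missing is the actual construction --- the paper takes $p_k$ growing doubly-exponentially (via $f(k)=2^{(100k)^{10}}$) and, to show no admissible $(w_0,\ldots,w_t)$ kills the linear form, passes to the dual basis $u_i$ of the binomial matrix, bounds the resulting integer coefficients $b_i$ by approximating the normalized matrix $\tilde P$ by a Vandermonde matrix and controlling the least eigenvalue of the associated Gram form, and derives a contradiction with the rapid growth of $p_k$. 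None of this (nor any substitute for it) appears in your proposal. Without producing a concrete sequence and proving the nonvanishing, the argument does not establish the key lemma, and hence does not prove the theorem.
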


The above theorem for the case of $\aleph = \aleph_0$ follows from Proposition 13 of \cite{GB}, which showed that every countable Boolean algebra has an SNZ $\Z$-valued charge. In~\cite{GB}, it was suggested that the answer to this question
might depend on the axioms of set theory (in particular, on large cardinal axioms). Our results show that they do not.

%, and hence ${\mathcal B}(2^{{\aleph}_0})$ has a $\Z$-valued SNZ charge.

The key ingredient of our proof of Theorem~\ref{thm1} is the existence of a {\em permutation-invariant} $\Z$-valued charge on
$\B(2^{\aleph_0})$. Propositions 12 and 15 of \cite{GB} together\footnote{The terminology of~\cite{GB} is different from ours. In~\cite{GB}, a strictly nonzero charge on $\B(2^\aleph)$ is 
referred to as a ``good" charge, and a permutation-invariant strictly nonzero charge on $\B(2^{\aleph})$ is referred to as, of course,  a ``very good" charge.} show that the existence of such a charge on $\B(2^{\aleph_0})$
implies the existence of a strictly nonzero $\Z$-valued charge on $\B(2^{\aleph})$ for every uncountable cardinal $\aleph$. Thus the following theorem
implies Theorem~\ref{thm1}.

\begin{theorem}
\label{thm2}
$B(2^{\aleph_0})$ has a permutation-invariant strictly nonzero $\Z$-valued charge.
\end{theorem}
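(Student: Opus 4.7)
The plan is to reduce Theorem~\ref{thm2} to a linear-algebra problem over $\Z$ and then solve that problem by a diagonal construction that exploits a finiteness property of a certain family of polynomials. First, a permutation-invariant charge $\mu$ on $\B(2^{\aleph_0})$ is determined by the numbers $c_k^{(n)} := \mu(C)$, where $C$ is any cylinder fixing $n$ coordinates with exactly $k$ of them set to $1$. Finite additivity applied upon adding one free coordinate gives the reverse Pascal recursion $c_k^{(n)} = c_k^{(n+1)} + c_{k+1}^{(n+1)}$; conversely every integer solution defines a valid permutation-invariant $\Z$-valued charge. A direct computation parametrizes these integer solutions bijectively by arbitrary integer sequences $(d_j)_{j \ge 0}$ via $d_j := c_j^{(j)}$ and the inversion $c_k^{(n)} = \sum_{j=k}^n (-1)^{j-k}\binom{n-k}{j-k}\, d_j$. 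In this parametrization, for any clopen $A$ one has $\mu(A) = \sum_j d_j \cdot [y^j] f_A(y)$, where $f_A(y) := \sum_{s \in A} y^{|s|}(1-y)^{n-|s|}$ is the polynomial that computes the probability that a string of i.i.d.\ Bernoulli$(y)$ coordinates lies in $A$. Note that $f_A$ lies in $\Z[y]$ and is independent of the level $n$ at which we represent $A$.

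The structural core of the proof is that the family $\mathcal F := \{f_A : A \text{ a nonempty clopen}\}$ consists of polynomials in $\Z[y]$ with $0 \le f_A(y) \le 1$ on $[0,1]$ (by the probability interpretation). A Chebyshev/Markov-type coefficient bound then gives: any polynomial of degree at most $N$ with sup-norm at most $1$ on $[0,1]$ has coefficients bounded by some $C_N < \infty$. Hence the set $\mathcal F_N := \{f \in \mathcal F : \deg f \le N\}$ is \emph{finite} for every $N$. I read this as the ``polynomial approximations of binomial coefficients'' mentioned in the abstract: it amounts to uniform (in $n$) control over the integer coefficients of the Bernstein-type combinations $\sum_k m_k y^k(1-y)^{n-k}$ with $0 \le m_k \le \binom{n}{k}$.

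Finally, I build $(d_j)_{j\ge 0} \in \Z^{\N}$ by induction on $N$. Given $d_0,\ldots,d_{N-1}$ with $L_d(f) := \sum_j d_j [y^j] f \ne 0$ for all $f \in \mathcal F_{N-1}$, consider the finitely many $f \in \mathcal F_N$ with $\deg f = N$. For each such $f$, the constraint $L_d(f) \ne 0$ forbids at most one value of $d_N$, namely $d_N = -\bigl(\sum_{j<N} d_j [y^j] f\bigr)/[y^N] f$, and only when that rational number happens to be an integer. Since only finitely many integers are thereby forbidden, one can always choose $d_N \in \Z$ avoiding them all; the resulting sequence yields the desired permutation-invariant SNZ $\Z$-valued charge. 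The main obstacle I anticipate is proving finiteness of $\mathcal F_N$ with sharp enough control on coefficient size for this induction to go through; the remainder is essentially bookkeeping.
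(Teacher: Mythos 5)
Your proof is correct and takes a genuinely different route to the linchpin of the argument. Both proofs start from the same reduction: a permutation-invariant charge is parametrized by an integer sequence ($p_n = h(n,0)$ in the paper, $d_j = c_j^{(j)}$ in yours), and SNZ becomes the assertion that a certain countable family of integer linear forms all avoid zero. Where the paper goes quantitative and yours goes structural. The paper fixes an explicit doubly-exponential growth rate $|p_k| > 2^{(100k)^{10}}\sum_{i<k}|p_i|$ and then proves, by a fairly delicate chain of estimates (a dual-basis computation, approximating a normalized binomial matrix $\tilde P$ by a Vandermonde matrix, and an eigenvalue lower bound for the resulting Gram form), that the relevant dual coefficients $b_i$ are too small for such a cancellation to occur. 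Your proof instead packages each nonempty clopen $A$ as the Bernstein-type polynomial $f_A(y) = \sum_k w_k y^k(1-y)^{n-k} \in \Z[y]$, observes that the constraints $0 \le w_k \le \binom{n}{k}$ are exactly the statement $0 \le f_A \le 1$ on $[0,1]$, and notes that $\mu(A)$ is the pairing of $(d_j)$ with the coefficient vector of $f_A$. The key finiteness step — that a degree-$\le N$ integer polynomial with sup-norm $\le 1$ on $[0,1]$ has coefficients bounded by a constant $C_N$, hence only finitely many such $f_A$ exist at each degree — is a soft fact (equivalence of norms in finite dimension, or Lagrange interpolation at $N+1$ points), and it feeds cleanly into a diagonal construction: at stage $N$, only finitely many integer values of $d_N$ are forbidden, so one can be chosen. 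This is shorter, more conceptual, and entirely elementary; it does not need the eigenvalue/Vandermonde machinery at all. What it gives up is explicitness: the paper produces a concrete growth condition (and can thereby discuss, as in the remark about Proposition 14 of \cite{GB}, how small the growth function may be taken), whereas your construction is purely an existence proof, choosing $d_N$ at each stage by avoiding finitely many bad values with no stated rate. Your closing worry is unfounded: no sharp control on $C_N$ is needed, only its finiteness, which you have already justified; the induction closes as written.
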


We prove Theorem~\ref{thm2} in Section~\ref{sectionmain}.
At its core, Theorem~\ref{thm2} is a statement about the existence of integer solutions to a 
certain countable system of linear inequations in countably many variables. 
The coefficients of these linear inequations are related to binomial
coefficients. We use linear algebraic arguments, as well as 
some polynomial approximations to binomial coefficients, to show
the existence of an integer solution to the given system of
inequations.

In Section~\ref{sectionPN}, we show that there is no SNZ charge on ${\mathcal P}(N)$. We conclude with some open problems.

\section{Notation and Preliminaries}

All $\log$s are to the base $2$.	
We define ${ 0 \choose 0} = 1$, and if $ b < 0 $ or $b > a$, then ${a \choose b } = 0$.

We recall some notation from \cite{GB}.

If $A$ and $B$ are finite disjoint subsets of an  index set $Y$ of cardinality $\aleph$, let $H(A, B) = \{ f \in \{0,1\}^Y: f(y) = 0   \mbox{  for  } y \in A \mbox{  and  } f(y) = 1  \mbox{  for  }   y \in B\}$.
Recall that a subset of $\{0,1\}^{Y}$ is {\em clopen} if
it can be expressed as the union of finitely many sets of the form
$H(A,B)$ with $A$, $B$ both finite.

Let $Y$ be an index set with cardinality $\aleph$.
Let $\mu$ be a $\Z$-valued charge on $\B(2^{Y})$.
We say that $\mu$ is {\em permutation-invariant} if
for all permutations $\pi: Y \to Y$ and all clopen sets $U$,
we have $\mu(\pi(U)) = \mu(U)$ (where for a set $U \subseteq \{0,1\}^Y$,
$\pi(U)$ is defined to equal $\{ f \circ {\pi^{-1}} \mid f \in U \}$).

It is easy to see that $\mu$ is permutation invariant if
and only if $\mu(H(A, B))$ depends only on the cardinalities of $A$ and $B$.

Let $\mu$ is a permutation invariant $\Z$-valued charge on $\B(2^Y)$.
Define
$h: \mathbb N \times \mathbb N \to \mathbb \Z$
by:
$$ h(m, n) = H(A, B),$$
for any disjoint $A, B$ with $|A| = k$, $|B| = k'$.
By finite additivity, we have 
$$h(m, n) = h(m+1, n) + h(m, n+1).$$
Using this relation, and letting $p_n = h(n, 0)$, it 
follows by induction
that the $p_n$ determine the $h(m,n)$ via the following
simple formula:
\begin{align}
\label{hpformula}
h(m,n) = \sum_{i = 0}^n (-1)^i {n \choose i} p_{m+i}.
\end{align}

Conversely, given any sequence of integers $p_0, p_1, \ldots$, 
if we define $h(m,n)$ by the above formula,
we get a $\Z$-valued charge $\mu$ defined by:
\begin{align}
\label{muhformula}
\mu(A,B) = h(|A|, |B|).
\end{align}

We now express the condition of strict nonzeroness
of a permutation invariant measure in terms of
the $h(m,n)$.
For every clopen set $U$ in $\B(2^Y)$,
there is a finite set $C \subseteq Y$ of size $t$ such that
$U$ can be expressed as the disjoint union of sets
of the form $H(A, B)$, with $A \cup B = C$ and $A \cap B = \emptyset$.
Thus $\mu(U)$ is of the form:
$$ \sum_{j =0 }^t w_j h(j,t-j),$$
where $w_j$ is an integer with $0 \leq w_j \leq {t \choose j}$ (here $w_j$ represents the number of  $A, B$ pairs appearing in the above representation of $U$  with $|A| = j$).

\iffalse{

Let $H(m, k)$ stand for one such  $H(A, B)$ for an $A$ with $|A| = m$ and $|B| = k$. If $\mu$ is a $\Z$-valued charge on we shall write $p_n = \mu(H(n, 0)$.

Then, $$\mu(H(m, k)) = \sum_{i = 0}^k {k \choose  i}(-1)^ip_{m+i}$$.

Then $\mu$ is strictly nonzero on ${\mathcal B}(2^{{\aleph_0}})$ if and only if $$ \sum_{m = 0,  m+k = n}^n i_k \mu(H(m. k))   \neq 0$$

whenever $0 \leq i_m \leq {n \choose m}$ for $m = 0, 1, \cdots n$ and at least one $i_k \neq 0$.
} \fi

We thus get the following criterion for strict nonzeroness of a charge.
Suppose we define a permutation-invariant $\Z$-valued charge $\mu$ on $\B(2^{Y})$ by specifying integers $p_0, p_1, \ldots$, and then defining
$h$ and $\mu$ by \eqref{hpformula} and \eqref{muhformula}.
Then $\mu$ is strictly nonzero if for all integers $t \geq 0$,
and for integers $w_0, w_1, \ldots, w_t$, not all zero, with $0 \leq w_j \leq { t \choose j }$,
$$ \sum_{j = 0}^t w_j h(j, t-j) \neq 0.$$

\section{ A permutation-invariant $\Z$-valued SNZ charge on $B(2^{Y})$}
\label{sectionmain}
The following theorem shows that if we pick the integers
$p_0, p_1, \ldots, $ growing sufficiently rapidly, then
the permutation-invariant $\Z$-valued charge defined on $\B(2^{\aleph})$
through the process described in the previous section
is strictly nonzero. This implies both Theorem~\ref{thm1}
and Theorem~\ref{thm2}.

\begin{theorem}
\label{mainthm}	
Define $f(k) = 2^{(100k)^{10}}$.

Let $p_0, p_1, \ldots$ be a sequence of integers such that $p_0 \neq 0$,
and for each $k\geq 1$,
$$|p_k| > f(k) \cdot (\sum_{i=0}^{k-1} |p_i|).$$

Define $h(m,n) = \sum_{i = 0}^n (-1)^i {n \choose i} p_{m+i}.$

Then for every $t \geq 0$,
and for integers $w_0, w_1, \ldots, w_t$, not all zero, with $0 \leq w_j \leq { t \choose j }$, we have:
$$ \sum_{j = 0}^t w_j h(j, t-j)  \neq 0.$$
\end{theorem}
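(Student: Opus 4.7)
The plan is to rewrite $S := \sum_{j=0}^t w_j\, h(j,t-j)$ as an integer linear combination $\sum_{k=0}^{t} c_k p_k$ of the $p_k$'s and then, with $K$ the largest index where $c_K \neq 0$, show that $|c_K p_K|$ strictly dominates the remaining terms thanks to the extreme growth of $(p_k)$. Substituting the definition of $h(j,t-j)$ and swapping the order of summation yields
\[
S \;=\; \sum_{k=0}^t c_k p_k, \qquad c_k \;=\; \sum_{j=0}^k (-1)^{k-j}\binom{t-j}{k-j} w_j \;\in\; \mathbb{Z}.
\]
A short triangular induction ($c_0 = w_0$, and if $w_0 = \cdots = w_{k-1} = 0$ then $c_k = w_k$) shows that some $c_k$ is nonzero whenever some $w_j$ is, so $K$ is well defined.

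The key reformulation is the generating-function identity
\[
Q(p) \;:=\; \sum_{j=0}^t w_j\, p^j(1-p)^{t-j} \;=\; \sum_{k=0}^t c_k\, p^k,
\]
which holds because $[p^k]\, p^j(1-p)^{t-j} = (-1)^{k-j}\binom{t-j}{k-j}$. Since each summand on the left is nonnegative and $\sum_j \binom{t}{j} p^j(1-p)^{t-j} \equiv 1$, the constraint $0\le w_j\le \binom{t}{j}$ forces $Q(p)\in[0,1]$ for every $p\in[0,1]$. Crucially, because $c_k = 0$ for $k > K$, the polynomial $Q$ actually has degree at most $K$. I therefore need a coefficient bound for a polynomial of degree $\le K$ taking values in $[0,1]$ on $[0,1]$. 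Expanding $Q$ in the shifted Chebyshev basis $T_k^*(p) = T_k(2p-1)$ on $[0,1]$, Chebyshev orthogonality gives $|\gamma_k|\le 2$ for each basis coefficient, and a direct expansion of $T_k^*$ in monomials yields coefficients of magnitude at most $2^{O(K)}$. Collecting,
\[
|c_k| \;\le\; B_K \;=\; 2^{O(K)} \qquad\text{for every } 0\le k\le K,
\]
a bound depending on $K$ alone, not on $t$. This is the essential point: the naive estimate $|c_k|\le\binom{t}{k} 2^k$ is hopelessly $t$-dependent, but integrality of $c_k$ together with the Bernstein structure forces $Q$ to be low degree, collapsing the bound to a function of $K$ only.

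With $B_K = 2^{O(K)}$ in hand and $f(K) = 2^{(100K)^{10}} \gg B_K$, the conclusion is immediate. For $K\ge 1$, using $|c_K|\ge 1$ together with the growth hypothesis on $(p_k)$,
\[
|c_K p_K| \;\ge\; |p_K| \;>\; f(K) \sum_{k<K} |p_k| \;>\; B_K \sum_{k<K} |p_k| \;\ge\; \Bigl|\sum_{k<K} c_k p_k\Bigr|,
\]
so $|S| > 0$. The case $K=0$ is trivial, giving $S = w_0 p_0 \neq 0$. The main obstacle I anticipate is the coefficient bound for $Q$; once $|c_k|\le 2^{O(K)}$ is established, the doubly exponential growth built into $f$ reduces everything to bookkeeping.
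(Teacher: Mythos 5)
Your proof is correct and takes a genuinely cleaner route than the paper's. Both arguments begin by rewriting $\sum_j w_j h(j,t-j) = \sum_k c_k p_k$ with $c_k = \sum_j (-1)^{k-j}\binom{t-j}{k-j}w_j$, and both reduce to showing $|c_k| \leq 2^{\mathrm{poly}(K)}$ where $K$ is the largest index with $c_K \neq 0$ (the paper's $b_k$ and $s$ are exactly your $c_k$ and $K$, after its change to the dual basis). The difference is in how that coefficient bound is obtained. The paper proceeds in two stages: first a separate lemma establishing $s \leq \tfrac{1}{100}(\log t)^{1/5}$, then an approximation of the column-normalized binomial matrix by a Vandermonde matrix combined with an eigenvalue lower bound for a Gram-type quadratic form; crucially it needs $s$ to be tiny relative to $t$ to control the approximation error. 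Your observation that $Q(p) := \sum_j w_j\,p^j(1-p)^{t-j}$ is a polynomial of degree $\leq K$ taking values in $[0,1]$ (by the Bernstein partition of unity and the constraint $0 \leq w_j \leq \binom{t}{j}$), followed by the shifted-Chebyshev coefficient bound, gives $|c_k| = 2^{O(K)}$ \emph{intrinsically}, with no reference to $t$ whatsoever. This eliminates the need for the paper's Lemma 3.2 and the entire Vandermonde/eigenvalue machinery of Lemma 3.4, and in fact gives a sharper bound ($2^{O(K)}$ versus $(20s)^{20s^2}$), which would allow a weaker growth condition on the $p_k$ than the one stated. The only part to flesh out for a fully rigorous writeup is the explicit constant in the Chebyshev expansion (e.g.\ $|\gamma_m|\leq 2$ via the orthogonality integral, plus $\ell_1$-norm $\leq c^m$ for the monomial coefficients of $T_m(2p-1)$), but the doubly-exponential slack in $f$ makes that bookkeeping easy.
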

\begin{proof}
Suppose not. That is, there exists a $t$ and integers $w_0, w_1, \ldots, w_t$, not all zero,
with $0 \leq w_j \leq {t \choose j}$ such that 
$$ \sum_{j = 0}^t w_j h(j, t-j) = 0.$$

Expanding the $h(m,n)$ in terms of the $p_i$, we get:
$$ \sum_{j=0}^t w_j \sum_{i=0}^{t-j} (-1)^{i} {t-j \choose i} p_{j + i} = 0,$$
which, after re-indexing in terms of $k = j+i$ and simplifying, gives us:
$$ \sum_{j=0}^t \sum_{k=0}^t   (-1)^{k - j} {t-j \choose t-k} w_j p_k  = 0,$$
(here we used the  fact that ${t-j \choose k-j } = {t-j \choose t - k}$).

Let $M$ be the matrix with rows and columns indexed by $\{0, 1, \ldots, t\}$,
whose $(j,k)$ entry is given by:
$$M_{j,k} = (-1)^{k-j} {t-j  \choose t-k}.$$

Let $v_k \in \mathbb Z^{t+1}$ denote the $k^{\rm{th}}$ column of this matrix.
Let $w$ denote the vector $(w_0, w_1, \ldots, w_t) \in \mathbb Z^{t+1}$.

In this notation, we have:
$$ \sum_{k=0}^{t} \langle w, v_k \rangle p_k = 0.$$

Observe that the $v_k$ form a basis for $\mathbb R^{t+1}$ (since the $v_k$ are
``upper triangular"). 
By assumption, $w$ is not the $0$ vector, and so there exists some $k$
such that $\langle w, v_k \rangle \neq 0$.
Let $s$ be the largest such $k$.
Then:
$$ \sum_{k=0}^{s} \langle w, v_k \rangle p_k = 0.$$

Observe that if $s = 0$, then we immediately have a contradiction to the above equation. Thus we may assume that $s \geq 1$.

\begin{lemma}
\label{lemorthog}
	$s \leq \frac{1}{100} (\log t)^{1/5}$.
%	For all $k \geq \frac{1}{100}\log \log t$,  $\langle w, v_k \rangle = 0$.
\end{lemma}
\begin{proof}
Suppose $s > \frac{1}{100} (\log t)^{1/5}$.

%Let $k^*$ be the largest $k$ with $\langle w, v_k \rangle \neq 0$.
By the formula above, we have:
$$p_{s} = \frac{-1}{\langle w, v_{s} \rangle} \sum_{k=0}^{s - 1} \langle w, v_k \rangle p_k.$$

Using the bounds we know on the coordinates of $w$ and the $v_k$, 
we have $$|\langle w, v_k \rangle | \leq \sum_{j = 0}^{t} {t \choose j} \cdot {t-j \choose t-k} \leq (k+1) \cdot t^k$$ for each $k$.
Also $|\langle w, v_{s} \rangle| \geq 1$, by integrality.
Thus: $$|p_{s}| \leq  s \cdot t^{s-1} \cdot (\sum_{k=0}^{s-1} |p_k| ).$$
Now if $s > \frac{1}{100} (\log t)^{1/5}$, then $s \cdot t^{s-1} < f(s)$ (since
$s \cdot t^{s-1} \leq t^{s}$, and $f(s)^{1/s} \geq 2^{(100 s)^{9}} > t$).
Thus:
$$|p_{s}| \leq f(s) \cdot (\sum_{k=0}^{s-1} |p_k| ).$$
This contradicts the hypothesis: 
$$ |p_{s}| > f(s) \cdot (\sum_{k=0}^{s-1} |p_k| ).$$

Thus $s \leq \frac{1}{100} (\log(t))^{1/5}$.
%Thus $\langle w, v_k \rangle = 0$ for 
%all $k \geq \log_2\log_2 t$.
\end{proof}

For $i \in \{0,1, \ldots, t\}$, let $u_i \in \Z^{t+1}$
be the vector given by:
$$u_i = ( {t-i \choose t }, {t-i \choose t-1}, \ldots, {t-i \choose 0} ).$$
Note that the first $i$ coordinates of this vector are $0$.

The next lemma shows that the $u_i$ vectors are a dual basis to the $v_i$ vectors. This fact is very old and classical, and we include a quick proof 
in the appendix for completeness.
\begin{lemma}
\label{lemdual}
$$ \langle u_i, v_k \rangle = \begin{cases} 1 & i = k \\ 0 & i \neq k \end{cases}.$$
\end{lemma}

%Let $s$ be the largest $k$ with $\langle w, v_k \rangle \neq 0$.
%Thus by Lemma~\ref{lemorthog}, we have $s \leq \frac{1}{100}\log \log t$.

By Lemma \ref{lemdual}, we know that $w$ is in the span of $u_0, u_1, \ldots, u_{s}$, 
and that $w$ is not in the span of $u_0, \ldots, u_{s-1}$.

Let $b_0, \ldots, b_s \in \mathbb R$ be such that $w = \sum_{i=0}^s b_i u_i$. 
Let $b$ be the row vector $(b_0, b_1, \ldots, b_s)$.
Observe that the $i$ coordinate of $u_i$ equals $1$, and for $j < i$, the $j$ coordinate of $u_i$
equals $0$. Thus, the $u_i$ are ``upper triangular", and since
$w \in \Z^{t+1}$, we get that $b_0, b_1, \ldots, b_s$ are all in $\Z$.
Furthermore, $b_s \neq 0$.

Using the equation $\sum_{k=0}^s\langle w, v_k \rangle p_k = 0$ along with Lemma~\ref{lemdual},
we get:
\begin{align}
\label{eqbp}
\sum_{k=0}^s b_k p_k = 0.
\end{align}

We will now show that the three facts:
\begin{itemize}
	\item $w = \sum_{i=0}^s b_i u_i$,
	\item $s \leq \frac{1}{100} (\log t)^{1/5}$,
	\item $0 \leq w_j \leq {t \choose j}$ for each $j$,
\end{itemize}
together imply that the $b_i$ are small, in the sense that $\sum_{i=0}^s |b_i| \leq (20s)^{20s^2}$.
This, combined with the fact that $b_s \neq 0$ and the equality
$\sum_{k=0}^s b_k p_k =0$, will contradict the rapid growth of the $p_k$.

Let $P$ be the $(s+1) \times (t+1)$ matrix whose rows are $u_0, u_1, \ldots, u_s$.
Then $ b \cdot P = w$. 

We know that $0 \leq w_j \leq {t \choose j}$. We now use this to deduce some information about the vector $b$.

Define $\tilde{P}$ to be the $(s+1) \times (t+1)$ matrix which is obtained from $P$ as follows:
for each $j \in \{0, 1, \ldots, t\}$, divide column $j$ of $P$ by ${t \choose j}$.
Thus $b \cdot \tilde{P}$ is a vector with all its coordinates lying in $[0,1]$.

Let us study the matrix $\tilde{P}$.
The $i,j$ entry of $\tilde{P}$ is given by:
\begin{align}
\tilde{P}_{i,j} &= \frac{ {t-i \choose t-j} }{ {t \choose j }}\\
&= \frac{ {t-i \choose t-j} }{ {t \choose t-j }}\\
&= \frac{ (t-i) (t-i-1) \ldots (j-i+1) }{ { t (t-1) \ldots (j+1)}}.
\end{align}
If $i < j$ and $i < t-j$, then we can cancel many common terms, and we get:
$$\tilde{P}_{i,j} = \frac{ j (j-1) \ldots (j-i+1) }{ { t (t-1) \ldots (t-i+1)}}.$$
Thus we have:
\begin{align}
	\label{binomapprox}
	(\frac{j - i + 1}{t})^i \leq \tilde{P}_{i,j} \leq (\frac{j}{t-i + 1})^i
\end{align}

The rest of the argument is motivated by the following observation.
If $t$ is very large relative to $s$ (as we know it is),
%$j = (1-\gamma) \cdot t$, and $i  < s \ll t$, then
then the above expression implies that $\tilde{P}_{i,j}$ is approximately $(\frac{j}{t})^i$.
Thus $\tilde{P}$ is approximately a Vandermonde matrix. 
This will enable us to express what we know about $b \cdot \tilde{P}$ 
in terms of evaluations of the polynomial $R(X) = \sum_{i=0}^s b_i X^i$.

\begin{lemma}
\label{bbound}
$\sum_{i=0}^s |b_i| \leq (20s)^{20s^2}$.
\end{lemma}
\begin{proof}
%Suppose not.
Let $C = \sum_{i=0}^s |b_i|$.

For $\ell \in \{1, 2, \ldots, s+1\}$, 
define $\lambda_\ell \in \{0,1, \ldots, t\}$ by:
$$\lambda_\ell = \lfloor\left(\frac{\ell}{s+2}\right) \cdot t\rfloor,$$
and let $y_\ell \in \Z^{t+1}$ be the $(\lambda_\ell)^{\rm{th}}$ column of $\tilde{P}$. We thus have $\langle b, y_\ell \rangle \in [0,1]$ for all $\ell \in \{1, \ldots, s+1\}$.

Define the polynomial $R(X) = \sum_{i=0}^{s} b_i X^i$.

The strategy is in two steps.
We will first show that for each $\ell \in \{1,2,\ldots, s+1\}$,
$$R(\frac{\ell}{s+2}) \approx \langle b, y_{\ell} \rangle.$$
We will then show that if $C$ is large, then $R(\frac{\ell}{s+2})$ must be $\gg 1$ for
some $\ell$. This will contradict the fact that
$\langle b, y_{\ell} \rangle \leq 1$.

\begin{lemma}
\label{lemerr}
	For each $\ell \in \{1,2, \ldots, s+1\}$,
	\begin{align}
\label{lemeqerr}
|\langle b, y_{\ell} \rangle - R(\frac{\ell}{s+2}) | \leq \frac{1}{t^{1/4}} \cdot C.
\end{align}
\end{lemma}
\begin{proof}
We have:
\begin{align}
\langle b, y_{\ell}\rangle - R(\frac{\ell}{s+2}) &= \sum_{i=0}^s b_i \tilde{P}_{i,\lambda_\ell} - \sum_{i=0}^s b_i \left(\frac{\ell}{s+2}\right)^{i}\\
&= \sum_{i=0}^s b_i  \left( \tilde{P}_{i,\lambda_\ell} - \left(\frac{\ell}{s+2}\right)^i \right) \\
&\leq \left(\sum_{i=0}^{s} |b_i| \right) \cdot \max_{i} \left| \tilde{P}_{i,\lambda_\ell} - \left(\frac{\ell}{s+2}\right)^i  \right|.
\label{eqerr}
\end{align}

We now estimate 
$$\left| \tilde{P}_{i,\lambda_\ell} - \left(\frac{\ell}{s+2}\right)^i  \right|.$$
Since $i \leq s < \frac{t}{s+2} - 1 \leq \lambda_\ell$ and 
$i \leq s < \frac{t}{s+2}-1 \leq t - \lambda_\ell$,
we may use equation $\eqref{binomapprox}$ to bound $\tilde{P}_{i, \lambda_\ell}$.
We thus get the upper bound:
\begin{align*}
\tilde{P}_{i, \lambda_\ell} &\leq\left( \frac{\lambda_\ell}{t - i + 1} \right)^i\\
&\leq \left( \frac{\frac{\ell}{s+2} \cdot  t + 1}{t - i + 1} \right)^i\\
&\leq \left( \frac{\frac{\ell}{s+2} \cdot t + 1 }{t - s + 1} \right)^i\\
&\leq \left( \frac{\ell}{s+2}  + \frac{s}{t-s+1}\right)^i \\
&\leq \left( \frac{\ell}{s+2} \right)^{i} \left( 1 + \frac{s (s+2)}{\ell(t-s+1)} \right)^i\\
&\leq \left( \frac{\ell}{s+2} \right)^{i} \left( 1 + \frac{s (s+2)}{(t-s+1)} \right)^s\\
&\leq \left( \frac{\ell}{s+2}\right)^i  e^{4s^3/t},
\end{align*}
where in the last step we used the elementary inequality $(1 + x) \leq e^{x}$ for all $x$.
Similarly, we get the lower bound:
\begin{align*}
\tilde{P}_{i, \lambda_\ell} & \geq \left( \frac{\frac{\ell}{s+2} t - i }{t} \right)^i\\
&\geq \left( \frac{\ell}{s+2} - \frac{i}{t} \right)^i\\
&\geq \left( \frac{\ell}{s+2} \right)^i \left( 1  - \frac{i (s+2)}{\ell \cdot t} \right)^i\\
&\geq \left( \frac{\ell}{s+2} \right)^i \left( 1  - \frac{s(s+2)}{t} \right)^s\\
&\geq \left( \frac{\ell}{s+2} \right)^i e^{-4s^3/t},
\end{align*}
where in the last step we used the elementary inequality $1-x \geq e^{-2x}$ for all $x \in [0,\frac12]$.
Now since $s \leq \frac{1}{100} (\log t)^{1/5} < \frac{1}{100} t^{1/4}$, we have $4s^3/t < \frac{1}{10^6 \cdot t^{1/4}}$.
Then by the elementary inequality $|e^x - 1| \leq 2|x|$ for all $x \in [-1, 1]$, and so
$$|e^{4s^3/t} - 1|, |e^{-4s^3/t}-1| \leq \frac{1}{10 \cdot t^{1/4}}.$$

Putting these together, we get that $\left| \tilde{P}_{i,\lambda_\ell} - \left(\frac{\ell}{s+2}\right)^i  \right| \leq \frac{1}{10 \cdot t^{1/4}}$
for each $\ell$.

Putting this back into \eqref{eqerr}, we get inequality~\eqref{lemeqerr} .
\end{proof}

\begin{lemma}
	\label{lem-quad}
	Let $c_0, \ldots, c_s \in \mathbb R$.

	There exists $ \ell \in \{1, 2, \ldots, s+1\}$ s.t.
	$$ \left| \sum_{i=0}^{s} c_i \left(\frac{\ell}{s+2} \right)^i \right| \geq \frac{1}{(10s)^{10s^2}} \cdot \left(\sum_{i=0}^s |c_i|^2 \right)^{1/2}.$$
\end{lemma}
\begin{proof}
	Let $Q : \mathbb R^{s+1} \to \mathbb R$ denote the quadratic form:
	$$ Q(c_0, \ldots, c_s) = \sum_{\ell = 1}^{s+1} \left( \sum_{i=0}^s c_i \left(\frac{\ell}{s+2}\right)^i \right)^2.$$
	We also use $Q$ to denote the matrix associated with this quadratic form.

	Note that $Q$ is positive definite (positive semi-definiteness is clear; to get positive definiteness, one needs to use
	the fact that a nonzero polynomial of degree at most $s$ cannot vanish at $s+1$ points).

	We now show that the smallest eigenvalue of $Q$ is at least $\frac{1}{ (10s)^{20s^2} }$.
	Using the Cauchy-Schwarz inequality, it is easy to see that $Q(c_0, \ldots, c_s) \leq (s+1)^2 \cdot (\sum_i c_i^2 )$, and thus
the top eigenvalue $\lambda_1$ of $Q$ is at most $(s+1)^2$. Furthermore, the determinant of $Q$ is a nonzero rational number
with denominator at most $(s+2)^{2s(s+1)}$. Thus the determinant of $Q$ is at least $\frac{1}{(s+2)^{2s(s+1)}}$.
Since the product of the eigenvalues equals the determinant, we conclude that the smallest eigenvalue of $Q$
is at least $\frac{\det(Q)}{\lambda_1^{s-1}} \geq \frac{1}{(s+2)^{2s ( s+1) + 2(s-1)}} \geq \frac{1}{(10s)^{10 s^2}}$.

	If the conclusion of the lemma does not hold, then
	$$Q(c_0, \ldots, c_s) \leq \frac{s}{(10s)^{20s^2}} \left(\sum_{i=0}^s |c_i|^2\right).$$ 
	
	This contradicts the above bound on the smallest eigenvalue of $Q$.
\end{proof}

By the Cauchy-Schwarz inequality, 
we have $(\sum_{i=0}^s |b_i|^2)^{1/2} \geq \frac{C}{\sqrt{s}}$.

By Lemma~\ref{lem-quad}, there exists $\ell \in \{1,2, \ldots, s+1\}$ such that 
$$|R(\frac{\ell}{s+2} )| \geq \frac{1}{(10s)^{10s^2}} \cdot \frac{C}{\sqrt{s}}.$$

Combining this with Lemma~\ref{lemerr}, we get:
\begin{align*}
 | \langle b, y_\ell \rangle | &\geq \frac{1}{(10s)^{10s^2}} \cdot \frac{C}{\sqrt{s}} - \frac{C}{t^{1/4}}\\
&\geq C \cdot \left( \frac{1}{(10s)^{11s^2}} - \frac{1}{t^{1/4}}\right).
\end{align*}

Since $s \leq \frac{1}{100} (\log t)^{1/5}$, we have that $(10s)^{11 s^2} < \frac{1}{2} t^{1/4}$, and so
$$\left( \frac{1}{(10s)^{11s^2}} - \frac{1}{t^{1/4}}\right) \geq \frac{1}{(20 s)^{20s^2}}.$$
Thus 
$$ | \langle b, y_\ell \rangle | \geq \frac{C}{(20s)^{20s^2}}.$$
But we know that $| \langle b, y_\ell \rangle| \leq 1$.

This implies $C \leq (20s)^{20s^2}$, as desired.
\end{proof}

We now complete the proof of Theorem~\ref{mainthm}.
By Equation~\eqref{eqbp}, 
$$ p_s = \frac{-1}{b_s} \cdot \left(\sum_{i=0}^{s-1} p_i b_i \right).$$
By Lemma~\ref{bbound}, we have that $|b_i| \leq (20s)^{20 s^2}$ for each
$i \leq s-1$.
Since $b_s \neq 0$, we have $|b_s| \geq 1$.
Thus:
$$|p_s| \leq (20s)^{20s^2} \cdot (\sum_{i = 0}^{s-1} |p_i| ).$$
On the other hand, the hypothesis tells us that $|p_s| > f(s) \cdot (\sum_{i = 0}^{s-1} |p_i| )$ (since $s \geq 1$).
But $(20s)^{20s^2} < f(s)$; this gives the desired contradiction.

This completes the proof of the theorem.

\end{proof}

Note that our main result also implies that for every torsion free group $G$ and any infinite cardinal $\aleph$, there is a $G$-valued SNZ charge on the Boolean algebra of clopen sets of $\{0,1\}^{\aleph}$.

Proposition 14 of~\cite{GB} shows that for every constant $c$, we may not
take $f(k) = c^k$ in the statement of Theorem~\ref{mainthm}.
It would be interesting to know how small we may take $f(k)$ in this theorem.

\section{$\Z$-valued SNZ charges on ${\mathcal P}(N)$}
\label{sectionPN}

We shall now consider the problem of existence of $\Z$-valued SNZ charges on ${\mathcal P}(\mathbb N)$. Proposition 12 of~\cite{GB} implies that there is a $\Z^{\mathcal{P}(N)}$-valued SNZ charge on $\mathcal{P}(\mathbb N)$. Below we show that
there is no $\Z$-valued SNZ charge on $\mathcal{P}(\N)$.

\begin{theorem}
\label{thmpn}
There is no $\Z$-valued SNZ charge on ${\mathcal P}(N)$. There is no $\mathbb Q$ valued SNZ charge on ${\mathcal P}(N)$.
\end{theorem}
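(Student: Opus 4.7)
The plan is to combine a standard uncountable-chain construction in $\mathcal{P}(\N)$ with the elementary observation that any SNZ charge with values in a group $G$ must be injective on chains of the underlying Boolean algebra. This is enough to rule out SNZ charges whose value group is countable, which simultaneously handles the $\Z$ and $\mathbb{Q}$ cases.

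First I would record the injectivity observation: if $A \subsetneq B$ are elements of a Boolean algebra and $\mu$ is SNZ, then $B \setminus A$ is a nonzero element of the algebra, so SNZ gives $\mu(B \setminus A) \neq 0_G$, and finite additivity then yields $\mu(B) = \mu(A) + \mu(B \setminus A) \neq \mu(A)$. Thus $\mu$ restricted to any chain is one-to-one into $G$. This is precisely the chain-countability fact of~\cite{GB} recalled in the introduction, and the same short proof applies verbatim with $G = \Z$ or $G = \mathbb{Q}$.

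Next I would exhibit an uncountable chain in $\mathcal{P}(\N)$. Fix any bijection $\phi : \N \to \mathbb{Q}$ and, for each $r \in \R$, define
$$A_r := \phi^{-1}\bigl((-\infty, r) \cap \mathbb{Q}\bigr) \subseteq \N.$$
Density of $\mathbb{Q}$ in $\R$ forces $A_r \subsetneq A_s$ whenever $r < s$ (pick any rational strictly between $r$ and $s$), so $\{A_r : r \in \R\}$ is a chain in $\mathcal{P}(\N)$ of cardinality $2^{\aleph_0}$.

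Combining the two steps, if $\mu$ were a $\Z$-valued or $\mathbb{Q}$-valued SNZ charge on $\mathcal{P}(\N)$, then $r \mapsto \mu(A_r)$ would be an injection from $\R$ into the countable set $\Z$ or $\mathbb{Q}$, a contradiction. I do not anticipate any real obstacle: the chain construction is classical, and the only point to verify is the strict inclusion $A_r \subsetneq A_s$, which is immediate from density of $\mathbb{Q}$ in $\R$.
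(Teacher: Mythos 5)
Your proposal is correct and follows essentially the same route as the paper: build the continuum-sized chain $\{A_r\}_{r\in\R}$ in $\mathcal{P}(\N)$ from an enumeration of $\mathbb{Q}$, observe that any SNZ charge is injective on chains, and conclude by countability of $\Z$ (resp.\ $\mathbb{Q}$). The only cosmetic difference is that you spell out the injectivity-on-chains lemma explicitly, whereas the paper simply invokes it.
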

\begin {proof}
Let us first see that in  ${\mathcal P}(N)$ there is a chain of cardinality of the continuum $c$. This is a folklore result. We give a simple argument for completeness.  Enumerate the rationals in $R$ as $q_1, q_2, \cdots ...$.For every real number $r$, let $A_r$ be the set $\{i: q_i <  r\}$. Then $A_r : r \mbox{  is a real number} \}$ is a chain of distinct sets of the cardinality of the contiunuum $c$. 

Of course, if there is a $\Z$-valued SNZ charge $\mu$ on ${\mathcal P}(N)$, $\mu(A_r) \neq \mu(A_s)$ if $r \neq s$. But $\mu$ can only take countably many 
values since $\mu$ is $\Z$-valued. Hence there is no $\Z$-valued SNZ charge on ${\mathcal P}(N)$. The same argument works for $\mathbb Q$-valued SNZ charges too.
\end{proof}

This raises an interesting problem. If $\mu$ is a $\Z$-valued SNZ charge  on a Boolean algebra $\mathcal A$ and if $\mathcal B \supset \mathcal A$ is another Boolean algebra, then under what conditions does there exist an extension of $\mu$   to a  $\Z$-valued SNZ charge on $\mathcal B$? In the next theorem we shall see some necessary conditions.

\begin{theorem}
\label{thm:mod}
Let $\mu $ be a $\Z$ valued  SNZ charge on a Boolean algebra $\mathcal A$. Suppose that  $\{A_i : i \in \N \}$  is an infinite family of  pairwise disjoint nonempty sets  in $\mathcal A$ such that $\mu(A_i) = a$ for all $i$.
then  there is a Boolean algebra $\mathcal B \supset \mathcal A$ such that $\mu$ cannot be extended as a $\Z$-valued SNZ charge on the Boolean algebra  $\mathcal B$.
\end{theorem}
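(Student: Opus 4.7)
The plan is to adapt the continuum-sized chain construction from the proof of Theorem~\ref{thmpn}: using the hypothesized disjoint family $\{A_i\}$ I will build a chain of cardinality $c$ inside a Boolean algebra $\mathcal{B} \supset \mathcal{A}$, after which any $\Z$-valued SNZ extension of $\mu$ would have to inject this chain into $\Z$, which is impossible.

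Concretely, via the Stone representation I first embed $\mathcal{A}$ as a subalgebra of $\mathcal{P}(X)$, where $X$ is the Stone space of $\mathcal{A}$, identifying each $A_i$ with a nonempty clopen set $\hat{A_i} \subseteq X$; the $\hat{A_i}$ remain pairwise disjoint. Enumerating the rationals $q_1, q_2, \ldots$, I set $S_r = \{ i \in \N : q_i < r \}$ and $B_r = \bigcup_{i \in S_r} \hat{A_i} \subseteq X$ for each real $r$. Density of $\mathbb Q$ in $\mathbb R$ guarantees that for $r < s$ there is some index $i$ with $r \leq q_i < s$, so $B_s \setminus B_r$ contains the nonempty set $\hat{A_i}$. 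I then take $\mathcal{B}$ to be the subalgebra of $\mathcal{P}(X)$ generated by the image of $\mathcal{A}$ together with $\{B_r : r \in \mathbb R\}$; by construction $\mathcal{B} \supseteq \mathcal{A}$, and $\{B_r\}_{r \in \mathbb R}$ is a chain of continuum many distinct elements of $\mathcal{B}$ in which $B_s \setminus B_r$ is nonzero whenever $r < s$.

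Now suppose, for contradiction, that $\nu$ is a $\Z$-valued SNZ charge on $\mathcal{B}$ extending $\mu$. For every pair $r < s$ the difference $B_s \setminus B_r$ is a nonzero element of $\mathcal{B}$, so strict nonzeroness together with finite additivity gives $\nu(B_s) - \nu(B_r) = \nu(B_s \setminus B_r) \neq 0$. Hence $r \mapsto \nu(B_r)$ is an injection $\mathbb R \to \Z$, which is absurd. The one nontrivial step in this plan is ensuring that such a $\mathcal{B}$ genuinely exists, i.e.\ that the formal unions $\bigcup_{i \in S_r} A_i$ can be realized as honest, distinguishable elements of a Boolean algebra extending $\mathcal{A}$; Stone's representation theorem handles this cleanly by placing $\mathcal{A}$ inside a concrete power set in which the $B_r$ are bona fide sets of ultrafilters, after which the argument reduces to the chain argument of Theorem~\ref{thmpn} with the $\hat{A_i}$ in place of singletons.
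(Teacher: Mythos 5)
Your argument is correct, but it takes a genuinely different route from the paper's, and it is worth noting what each approach buys. You reuse the uncountable-chain obstruction from the proof of Theorem~\ref{thmpn}: from the infinitely many disjoint nonempty $A_i$ you manufacture, inside the Stone space, a chain $\{B_r\}_{r\in\R}$ of continuum many distinct sets in which $B_s \setminus B_r$ is nonempty for $r<s$, and then any SNZ $\Z$-valued charge on the generated $\B$ would inject $\R$ into $\Z$. This is sound, and you are right that Stone representation is the clean way to realize the formal infinite unions $\bigcup_{i\in S_r} A_i$ as honest elements of an extension of $\mathcal A$ (the paper silently does the same thing for its $E_k$). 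Two observations, though. First, your argument never uses the hypothesis that $\mu(A_i)=a$ is constant, nor that $\nu$ actually extends $\mu$; you are really proving the stronger (and simpler) fact that any Boolean algebra with infinitely many disjoint nonzero elements sits inside a Boolean algebra with an uncountable chain, hence one admitting no $\Z$-valued SNZ charge at all. Second, and more to the point, the paper's proof produces a $\B$ generated by $\mathcal A$ together with only \emph{finitely many} new sets ($E_1,\ldots,E_{a+1}$), via a pigeonhole argument mod $a$: among the $a+1$ distinct values $\nu(E_k)$, two are congruent mod $a$, so some nonempty difference $F$ has measure $pa$, and then a finite union $G\subsetneq F$ of $p$ of the $A_i$'s has the same measure, forcing $\nu(F\setminus G)=0$. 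This finitary refinement is used immediately afterward in the paper (the discussion of the finite–cofinite algebra on $\N$), and your construction, which requires continuum many new generators, does not yield it. So: correct proof of the stated theorem, but it proves a different-shaped statement than the one the authors actually exploit, precisely because it discards the constant-value hypothesis that drives the finitary argument.
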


\begin {proof}
Since $\mu$ is SNZ, $a$ is a nonzero integer. Take a strictly decreasing sequence of subsets $D_1, D_2, \cdots D_{a+1}$ of $\N$ so that $D_i - D_{i+1}$ is infinite for all $i$. For $1 \leq k \leq a+1$, let $E_k = \cup_{i \in D_k} A_i$. Then $E_1, E_2, \cdots E_{a+1}$ is a strictly increasing sequence of sets and  $\{\mu(E_1) , \mu(E_2), \cdots , \mu(E_{a+1})\} $ is a set of $a+1$ many distinct integers. Hence there exist integers $\ell$ and $m$, with $\ell < m$, such that $a$ divides $\mu(E_\ell) - \mu(E_m)$. Hence $E_m - E_l$, call it $F$, is a nonempty set of $\mu$ measure equal to $pa$. But this set also is an infinite union of $A_i'$s. Hence, if we take a union of $p$ many of these $A_i'$s and call it $G$, then $\mu(G) = pa$. Hence $G \subset F$, $\mu(G) = \mu(F)$ and $G \neq F$. Hence $\mu(F - G) = 0$. Thus $\mu$ cannot be extended as a $\Z$ valued SNZ charge on the Boolean algebra generated by $\mathcal A$ and  $E_1, E_2, \cdots E_{a+1}$.
\end {proof}

Let us consider the $\Z$-valued SNZ charge $\mu$  on the finite cofinite Boolean algebra $\mathcal A$ on $N$ defined by $\mu(A) = \#(A)$ if $A$ is finite and $= -1 - \#(A^c)$ if $A$ is cofinite. By Theorem~\ref{thmpn} this charge cannot be extended to ${\mathcal P}(N)$ as a $\Z$-valued SNZ charge. By the proof of  Theorem~\ref{thm:mod}  there is a Boolean algebra $\mathcal B$ which is generated by $\mathcal A$ and finitely many sets so that $\mu$ cannot be extended as a $\Z$ valued SNZ charge. 

In fact more is true for this charge. $\mu$ cannot be extended as a $\Z$-valued SNZ charge on the Boolean algebra generated by $\mathcal A$ and the set $E$ of even numbers. The proof is left as an exercise. This gives an constructive negative answer to the following question: If $\mu$ is a $\Z$-valued SNZ charge on a Boolean algebra ${\mathcal A} $ and if ${\mathcal B} $ is the Boolean algebra generated by $\mathcal A$ and a set $C$, should there exist an SNZ extension of $\mu $ to $\mathcal B$? A nonconstructive negative answer to this question can be deduced from Theorem~\ref{thmpn} and Zorn's lemma.

\section{Problems}

The problem of finding a combinatorial necessary and sufficient condition for the existence of a $\Z$-valued SNZ charge seems to be quite interesting.

Let ccc denote the countable chain condition: every collection of pairwise disjoint sets is countable. Let ecc denote the condition: every chain is countable.
By a result of~\cite{GB}, every nonatomic Boolean algebra which admits
an SNZ $\Z$-valued charge satisfies ccc and ecc.  
If $\B$ is a nonatomic Boolean algebra that satisfies both ccc and ecc
then should  $\B$ admit an SNZ $\Z$-valued charge? We suspect not.

\appendix
\section{Proof of Lemma~\ref{lemdual}}

\begin{proof}
By definition,
$$\langle u_i, v_k \rangle = \sum_{j=0}^t (-1)^{k-j} {t-i \choose t-j} {t-j \choose t - k}.$$

The $i = k$ case follows by observing that the only nonzero term in the above sum comes from $j = i = k$.

Now we deal with the case $i \neq k$.
Let $A(X)$ be the polynomial given by:
$$A(X) = (1-X)^{t-i} = \sum_{j=i}^{t} (-1)^{t-j} {t-i \choose t-j} X^{t-j} = \sum_{j=0}^{t} (-1)^{t-j} {t-i \choose t-j} X^{t-j}.$$
Note that the $p$'th derivative $A^{(p)}(1)$ equals zero in the following two cases:
\begin{itemize}
\item $p < t-i$: Then $A^{(p)}(X)$ is divisible by $(1-X)$, and so $A^{(p)}(1) = 0$.
\item $p > t-i$: Then $A^{(p)}(X)$ is the $0$ polynomial, since $A$ has degree $t-i$. In particular,
$A^{(p)}(1) = 0$.
\end{itemize}

Finally, by differentiating term-by-term, we see that
$$\frac{1}{p!} A^{(p)}(X) = \sum_{j=0}^t (-1)^{t-j} {t-i \choose t-j} {t-j \choose p} X^{t-j-p}.$$
Substituting $p = t-k$, $X = 1$, and using the above observations on $A^{(p)}(1)$,
the lemma follows.
\end{proof}

s\end{document}